\newcommand{\ra}{\rightarrow}
\newcommand\enn{\mathbb N}
\newcommand{\spann}{\mbox{span}}
\newcommand{\cspann}{\overline{\mbox{span}}}
\newcommand{\HB}{\text{H{\kern -0.35em}B}}
\newcommand{\vertiii}[1]{{\left\vert\kern-0.25ex\left\vert\kern-0.25ex\left\vert#1\right\vert\kern-0.25ex\right\vert\kern-0.25ex\right\vert}}
\newcommand{\overbar}[1]{\mkern 1.5mu\overline{\mkern-1.5mu#1\mkern-1.5mu}\mkern 1.5mu}
\newtheorem{thm}{Theorem}[section]
\newtheorem{prop}[thm]{Proposition}
\newtheorem{lem}[thm]{Lemma}
\theoremstyle{definition}
\newtheorem{defn}[thm]{Definition}
\theoremstyle{remark}
\newtheorem{rem}[thm]{Remark}
\author{André Martiny}
\title{On Octahedrality and M{\"u}ntz spaces}
\begin{document}
	\maketitle
	\begin{abstract}
		 We show that every Müntz space can be written as a direct sum of Banach spaces $X$ and $Y$, where $Y$ is almost isometric to a subspace of $c$ and $X$ is finite dimensional. We apply this to show that no Müntz space is locally octahedral or almost square. 
	\end{abstract}

\section{Introduction}

Denote the closed unit ball, the unit sphere, and the dual space of a Banach space $X$  by $B_X$, $S_X$, and $X^*$ respectively. 
Let $\Lambda=(\lambda_i)_{i=0}^\infty$, with $\lambda_0=0$,  be a strictly increasing sequence of non-negative real numbers and let \(\Pi(\Lambda):={\spann}(t^{\lambda_i})_{i=0}^\infty\subseteq C[0,1],	 \) where \(C[0,1]\) is the space of real valued continuous functions on $[0,1]$ endowed with the canonical sup-norm $\|\cdot\|_\infty$. We will call $\Lambda= (\lambda_i)_{i=0}^\infty$ a \emph{Müntz sequence} and  $M(\Lambda):=\overbar{\Pi(\Lambda)}$ a \emph{M{\"u}ntz space} if $\sum_{i=1}^\infty 1/{\lambda_i} <\infty$. This terminology is justified by Müntz famous theorem from 1914, that $\Pi(\Lambda)$ is  dense in $C[0,1]$ \emph{if and only if} $\lambda_0=0$ and $\sum_{i=1}^\infty 1/{\lambda_i} =\infty$.

\begin{defn}
	Let $X$ be a Banach space. Then $X$ is
\begin{enumerate}[label=(\roman*)]\label{def:Lokaltoktahedralt}
		\item \emph{locally octahedral} (LOH) if for every $x\in S_X$ and $\varepsilon >0$ there exists $y\in S_X$ such that $\|x\pm y\|>2-\varepsilon$ \\
		\item \emph{octahedral} (OH) if for every $x_1,\ldots,x_n \in S_X$ and $\varepsilon>0$ there exists $y\in S_X$ such that $\|x_i \pm y\|>2-\varepsilon$ for all $i\in\{1,\ldots,n\}$.
\end{enumerate}
\end{defn}
In this paper we will show that no Müntz space is OH, answering the question posed in \cite{MR3714458} whether Müntz spaces can be OH. A partial negative answer was given in \cite[Remark~2.9]{MR3714458} for Müntz spaces with Müntz sequences consisting only of integers, by combining the Clarkson-Erdös-Schwartz Theorem (see \cite[Theorem 6.2.3]{MR2190706}) with a result of Wojtaszczyk (see \cite[Theorem 1]{werner2008remark}).
 Our answer is based upon a generalization of a result of Werner \cite{werner2008remark}. We show that $M(\Lambda)$ can be written as a direct sum $X\oplus Y$ where $Y$ is almost isometric to a subspace of $c$ and $X$ is finite dimensional. As usual $c$ denotes the Banach space of real valued convergent sequences with the supremum norm.  

\begin{defn}
	Let $X$ be a Banach space. Then $X$ is 
\begin{enumerate}[label=(\roman*)]\label{def:ASQ}
	\item \emph{locally almost square} (LASQ) if for every $x\in S_X$ there exists a sequence $\{y_n\}$ in $B_X$ such that $\|x\pm y_n\| \rightarrow 1$ and $\|y_n\|\rightarrow 1$. \\ 
	\item  \emph{almost square} (ASQ) if for every $x_1,\ldots,x_k\in S_X$ there exists a sequence $\{ y_n\}$ in $B_X$ such that $\|y_n\|\rightarrow 1$ and $\|x_i\pm y_n\| \rightarrow 1$ for every $i\in \{1,\ldots,k\}$.	 
\end{enumerate}
\end{defn}
Both ASQ and OH  are closely related to the area of diameter two properties, which has received  intensive attention in the recent years (see for example  \cite{MR3496032}, \cite{MR3415738}, \cite{MR3714458}, \cite{MR3098474}, \cite{haller2018}). Trivially ASQ implies LASQ and OH implies LOH. 

The area of diameter two properties concerns slices of the unit ball, i.e. subsets of the unit ball of the form 
\[
S(x^*,\varepsilon):=\{ x\in B_X : \ x^*(x)>1-\varepsilon		\},
\]
 where $x^*\in S_{X^*}$ and $\varepsilon>0$. Müntz spaces and their diameter two properties were studied in \cite{MR3714458}. Haller, Langemets, Lima and Nadel \cite{haller2018} pointed out that the proof of \cite[Theorem~2.5]{MR3714458} actually shows that, in any $M(\Lambda)$ we have that for every finite family $\{S_i 	\}_{i=1}^n$ of slices of $B_{M(\Lambda)}$ and $\varepsilon>0$, there exist $x_i\in S_i$ and $y \in B_{M(\Lambda)}$, independent of $i$, such that $x_i\pm y \in S_i$ for every $i\in \{1,\ldots,n\}$ and $\|y\|>1-\varepsilon$. This property is formally known as \emph{the symmetric strong diameter two property }(SSD2P).

It is known that if a Banach space is ASQ then it also has the SSD2P. In fact, ASQ is strictly stronger than SSD2P (see \cite[Theorem 2.1d and Example 2.2]{haller2018}).
A natural question is therefore if a Müntz space can be ASQ. The techniques developed in this article will be used to show that this is never the case.

Note that we can exclude the constants and consider the subspace $M_0(\Lambda):= \cspann\{t^{\lambda_n}\}_{n=1}^\infty$ of $M(\Lambda)$ and the following results still holds true.

The notation $\|f\|_{[0,a]} :=\sup_{x\in [0,a]} |f(x)|$ will be used throughout the paper.
We use standard Banach space terminology and notation (e.g. \cite{MR3526021}).


\section{Results}

The main results of this article relies on the following theorem (which can be found with proof in \cite{MR1415318}): 
\begin{thm}\label{thm:bound-bern}[Bounded Bernstein's inequality]
	Assume that $1\leq \lambda_1<\lambda_2<\lambda_3 \cdots $ and $ \sum_{i=1}^\infty 1/\lambda_i <\infty$. Then for every  $\varepsilon >0$ there is a constant $c_\varepsilon$ such that 
	\[
	\|p'\|_{[0,1-\varepsilon]} \leq c_\varepsilon \|p\|_{[0,1]},
	\]	
	for all $p\in \Pi(\Lambda)$.
\end{thm}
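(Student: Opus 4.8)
The plan is to extract the inequality from the analytic structure that the Müntz condition $\sum_{i=1}^\infty 1/\lambda_i<\infty$ imposes on $M(\Lambda)$. Fix $\varepsilon\in(0,1)$ and consider the linear map $R\colon M(\Lambda)\to C^1[0,1-\varepsilon]$ sending $f$ to its restriction to $[0,1-\varepsilon]$, where $C^1[0,1-\varepsilon]$ is the Banach space of continuously differentiable functions with norm $\|g\|_{[0,1-\varepsilon]}+\|g'\|_{[0,1-\varepsilon]}$. If $R$ is well defined and bounded, the theorem follows: for every $p\in\Pi(\Lambda)\subseteq M(\Lambda)$ we get $\|p'\|_{[0,1-\varepsilon]}\le\|Rp\|_{C^1}\le\|R\|\,\|p\|_{[0,1]}$, so $c_\varepsilon:=\|R\|$ does the job. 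I would obtain boundedness of $R$ from the closed graph theorem, which reduces everything to two points: (i) $R$ is well defined, i.e.\ every $f\in M(\Lambda)$ restricts to a continuously differentiable function on $[0,1-\varepsilon]$; and (ii) the graph of $R$ is closed.

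Point (ii) is soft: if $f_n\to f$ in $M(\Lambda)$ (that is, uniformly on $[0,1]$) and $Rf_n\to g$ in $C^1[0,1-\varepsilon]$, then $f_n\to g$ and $f_n\to f$ pointwise on $[0,1-\varepsilon]$, whence $g=f|_{[0,1-\varepsilon]}=Rf$. The real content is (i), and this is precisely the Clarkson-Erdös-Schwartz phenomenon: the assumption $\sum 1/\lambda_i<\infty$ forces every element of $\overbar{\Pi(\Lambda)}$ to coincide on $[0,1)$ with a function analytic on a neighbourhood of $(0,1)$ and, because the exponents satisfy $\lambda_i\ge 1$, to be $C^1$ up to and including $0$; in particular each $f\in M(\Lambda)$ lies in $C^1[0,1-\varepsilon]$. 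Concretely, $f$ has a representation $f(x)=\sum_{i=0}^\infty a_i x^{\lambda_i}$ on $[0,1]$ which, by the same theory, may be differentiated term by term with the differentiated series converging uniformly on $[0,1-\varepsilon]$; the condition $\lambda_i\ge1$ is exactly what prevents $\tfrac{d}{dx}x^{\lambda_i}$ from blowing up near $0$. Granting (i) and (ii), the closed graph theorem gives the boundedness of $R$, and hence the theorem.

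The main obstacle is of course (i): that $\sum 1/\lambda_i<\infty$ controls limits of Müntz polynomials so rigidly is the analytic heart of the matter, and is why it is natural to cite \cite{MR1415318} rather than reproduce the argument. If one wants the inequality with an explicit constant, I would proceed more concretely via the coefficient functionals $a_i^*\in M(\Lambda)^*$ biorthogonal to $(t^{\lambda_i})_{i\ge0}$: their existence and, crucially, their subexponential growth $\|a_i^*\|=e^{o(\lambda_i)}$ are again consequences of $\sum 1/\lambda_i<\infty$ (obtainable from estimates on the Müntz-Legendre polynomials, or on the Blaschke-type products $\prod_{j\ne i}\bigl|(\lambda-\lambda_j)/(\lambda+\lambda_j)\bigr|$ that appear in contour-integral formulas for the $a_i^*$). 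Then for $p=\sum_{i=0}^n c_i t^{\lambda_i}\in\Pi(\Lambda)$ and $x\in[0,1-\varepsilon]$, using $\lambda_i\ge1$ so that $x^{\lambda_i-1}\le(1-\varepsilon)^{\lambda_i-1}$,
\[
|p'(x)|\le\sum_{i=0}^n|c_i|\,\lambda_i\,(1-\varepsilon)^{\lambda_i-1}\le\Bigl(\sum_{i=0}^\infty\|a_i^*\|\,\lambda_i\,(1-\varepsilon)^{\lambda_i-1}\Bigr)\|p\|_{[0,1]},
\]
and the last series converges because $\|a_i^*\|\lambda_i(1-\varepsilon)^{\lambda_i-1}$ decays geometrically in $\lambda_i$ while $\sum 1/\lambda_i<\infty$ forces $\lambda_i\to\infty$ fast enough for summability; one then takes $c_\varepsilon$ to be the value of that sum. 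In either approach the functional-analytic packaging and the elementary monotonicity estimate are routine, and all the substance sits in the analyticity / coefficient-growth input driven by the Müntz condition.
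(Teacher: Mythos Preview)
The paper does not prove Theorem~\ref{thm:bound-bern}; it is quoted from Borwein--Erd\'elyi \cite{MR1415318}, so there is no in-paper argument to compare your sketch against. In \cite{MR1415318} the inequality is derived from a Remez-type inequality for M\"untz polynomials (a bound on $\|p\|_{[0,1]}$ in terms of $\|p\|$ restricted to a subset of $[0,1]$ of prescribed Lebesgue measure), which is a more direct and self-contained route than either of yours.

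Both of your outlines are correct as reductions, and the soft steps---the closed-graph verification, the pointwise bound $|p'(x)|\le\sum_i|c_i|\lambda_i(1-\varepsilon)^{\lambda_i-1}$, and the summability via $\lambda_i e^{-c\lambda_i}\le K/\lambda_i$ together with $\sum 1/\lambda_i<\infty$---are all fine. What deserves flagging is that the hard inputs you defer to are of essentially the same depth as the theorem itself. The Clarkson--Erd\H{o}s--Schwartz analyticity statement for arbitrary real exponents and the subexponential bound $\|a_i^*\|=e^{o(\lambda_i)}$ on the coefficient functionals are two faces of the same phenomenon (each yields the other almost immediately), and in the generality required here---real exponents, no gap condition on $(\lambda_i)$---the cleanest proofs in the literature pass through exactly the Remez/Bernstein machinery of \cite{MR1415318}. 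So while nothing you wrote is wrong, neither sketch is an independent proof; you are right that the honest move is simply to cite \cite{MR1415318}, which is precisely what the paper does.
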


We will also need a lemma which certainly is well known, but for completeness we include a proof.


\begin{lem}\label{lem:remove-fincodim}
	Let $\displaystyle Z=\cspann(z_i)_{i\in \enn}$ and let $N\in \enn$. If $Y=\cspann(z_i)_{i>N}$ then $Z/Y=\spann(\pi(z_i))_{i\leq N}$, where $\pi :Z\ra Z/Y$ is the quotient map. Consequently $Z/Y$ has finite dimension and  $Z=X\oplus Y$ where $X=\spann(x_i)_{i\leq N}$. 
\end{lem}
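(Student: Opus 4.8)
The plan is to use nothing more than continuity of the quotient map together with the fact that a finite-dimensional subspace of a normed space is closed.

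First I would note that $Y=\cspann(z_i)_{i>N}$ is by definition a closed subspace of $Z=\cspann(z_i)_{i\in\enn}$, so the quotient $Z/Y$ is a Banach space and $\pi\colon Z\ra Z/Y$ is a continuous linear surjection. Since $\pi$ is continuous, $\pi(\overline{A})\subseteq\overline{\pi(A)}$ for every set $A$, and by linearity $\pi(\spann(z_i)_{i\in\enn})=\spann(\pi(z_i))_{i\in\enn}$. Now for $i>N$ we have $z_i\in Y$, hence $\pi(z_i)=0$; consequently $\spann(\pi(z_i))_{i\in\enn}=\spann(\pi(z_i))_{i\leq N}$, a finite-dimensional and therefore closed subspace of $Z/Y$. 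Putting these together, $Z/Y=\pi(Z)=\pi\big(\,\overline{\spann(z_i)_{i\in\enn}}\,\big)\subseteq\overline{\spann(\pi(z_i))_{i\leq N}}=\spann(\pi(z_i))_{i\leq N}$, and the reverse inclusion is trivial. This proves the stated identity and gives $\dim(Z/Y)\leq N<\infty$.

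For the direct-sum statement I would extract from $\{\pi(z_i)\}_{i\leq N}$ a basis $\{\pi(z_{i_1}),\dots,\pi(z_{i_k})\}$ of $Z/Y$ and set $X:=\spann(z_{i_1},\dots,z_{i_k})$. Then $\pi|_X\colon X\ra Z/Y$ is surjective by construction and injective because $\dim X=k=\dim(Z/Y)$; hence $X\cap Y=\ker(\pi|_X)=\{0\}$, and for any $z\in Z$ there is $x\in X$ with $\pi(z-x)=0$, i.e. $z-x\in Y$, so $X+Y=Z$. Since $X$ is finite-dimensional (hence closed) and $Y$ is closed with $X\cap Y=\{0\}$, the algebraic direct sum is automatically a topological one: the projection onto $X$ along $Y$ factors through the isomorphism $(\pi|_X)^{-1}\circ\pi$, which is bounded. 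Renaming $x_j:=z_{i_j}$ yields $Z=X\oplus Y$ with $X=\spann(x_j)_{j\leq k}$ and $k\leq N$.

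I do not anticipate a serious obstacle. The only point that needs a little care is that one should \emph{not} simply take $X=\spann(z_i)_{i\leq N}$, since the $z_i$ with $i\leq N$ may be linearly dependent modulo $Y$ (for instance if some $z_i=z_j$ with $i\leq N<j$); passing to a sub-collection whose images form a basis of $Z/Y$ repairs this. The genuinely essential observation is the first one: $\pi$ sends the closed span of the $z_i$ into the closed — because finite-dimensional — span of the images $\pi(z_i)$, which is exactly what forces $\operatorname{codim}Y$ to be finite.
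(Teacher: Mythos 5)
Your argument for $Z/Y=\spann(\pi(z_i))_{i\leq N}$ is correct and is essentially the paper's own proof: the paper approximates $z\in Z$ by a finite linear combination of the $z_i$ and then invokes the closedness of the finite-dimensional subspace $\spann(\pi(z_i))_{i\leq N}$, which is exactly your $\pi(\overline{A})\subseteq\overline{\pi(A)}$ step written out with an explicit $\varepsilon$. Your treatment of the direct sum is a useful supplement, since the paper asserts $Z=X\oplus Y$ without proof; in particular your observation that one may have to pass to a sub-collection of $\{z_i\}_{i\leq N}$ whose images form a basis of $Z/Y$ (because $\spann(z_i)_{i\leq N}$ could meet $Y$ nontrivially for a general sequence) is a legitimate, if minor, refinement of the statement as written, and your remark that the finite-dimensional complement is automatically topological closes the one gap the paper leaves implicit.
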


\begin{proof}
	Let $w\in Z/Y$ and  choose $z\in Z$ such that $w=\pi(z)$. Let $\varepsilon>0$ and find $x=\sum_{i=1}^k a_i z_i\in \spann(z_i)_{i\in \enn}$ such that $\|x-z\| <\varepsilon$. 
	Now 
	\[\|w-\pi(x)\|=\|\pi(z-x)\|<\varepsilon,
	\] 
	 and because $\pi(x)\in \spann(\pi(z_i))_{i\leq N}$, we conclude that \[w=\cspann(\pi(z_i))_{i\leq N}=\spann(\pi(z_i))_{i\leq N}.\]
\end{proof}

\begin{rem}
	For every $N \in \enn$ we have that
	$\cspann(t^{\lambda_i})_{i \ge N}$ is a finite codimensional subspace of
	$M(\Lambda).$
\end{rem}


Now let us show how the Bounded Bernstein inequality can be used to map Müntz spaces $M(\Lambda)$ with $\lambda_1\geq 1$ almost isometrically into subspaces of $c$.

\begin{prop}\label{Prop:almostiso}
	Let $\Lambda$ be a Müntz sequence with $\lambda_1\geq 1$. Then the associated M{\"u}ntz space $M(\Lambda)$ is almost isometric to a subset of $c$.
	That is, for every $\varepsilon>0$ there is an operator $J_{\varepsilon} : M(\Lambda) \rightarrow c$ such that
	\[
	(1-\varepsilon)\|f\|_\infty \leq \|J_\varepsilon f \| \leq \| f \|_\infty
	\]
\end{prop}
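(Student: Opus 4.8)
The plan is to build the embedding $J_\varepsilon$ out of point evaluations at a carefully chosen sequence of points in $[0,1]$. The natural first guess is the map $f \mapsto (f(1-1/n))_{n\ge 1}$ together with the limit value $f(1)$; since every $f \in M(\Lambda)$ is continuous on $[0,1]$, the sequence $(f(1-1/n))_n$ converges to $f(1)$, so this genuinely lands in $c$, and it is clearly norm-nonincreasing into $\ell_\infty$. The issue is that the resulting sup over a discrete set of points plus the endpoint need not recover $(1-\varepsilon)\|f\|_\infty$: the sup of $|f|$ on $[0,1]$ could be attained at a point far from all the sampling points. So the real work is choosing the sampling points dense enough, in a way that is uniform over the whole (infinite-dimensional) space $M(\Lambda)$, and here is where the Bounded Bernstein inequality (Theorem~\ref{thm:bound-bern}) enters.

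First I would reduce to controlling $\|f\|$ on an interval $[0,1-\delta]$ away from the endpoint. Fix $\varepsilon>0$. The Bounded Bernstein inequality gives, for each $\delta>0$, a constant $c_\delta$ with $\|f'\|_{[0,1-\delta]} \le c_\delta \|f\|_\infty$ for all $f\in\Pi(\Lambda)$ (and by density for all $f\in M(\Lambda)$, since the estimate passes to limits). This is a uniform Lipschitz bound on $[0,1-\delta]$ scaled by $\|f\|_\infty$. Consequently, if I place sampling points $t_1<t_2<\dots<t_m$ forming a mesh of $[0,1-\delta]$ with spacing smaller than some $\eta$, then for any $s\in[0,1-\delta]$ there is a $t_j$ within $\eta$ of it, whence $|f(s)| \le |f(t_j)| + c_\delta\,\eta\,\|f\|_\infty$. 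Choosing $\eta$ so that $c_\delta\,\eta \le \varepsilon/2$ yields $\|f\|_{[0,1-\delta]} \le \max_j |f(t_j)| + \tfrac{\varepsilon}{2}\|f\|_\infty$, uniformly in $f$.

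Second, I would handle the remaining interval $[1-\delta,1]$. Here I cannot use a Bernstein-type bound (the derivative blows up near $1$), so instead I exploit monotonicity-type behaviour near the endpoint: it is a standard fact for Müntz spaces (following from the Clarkson--Erdős--Schwartz circle of ideas, or more elementarily from the fact that each $t^{\lambda_i}$ and hence each $f\in M(\Lambda)$ extends analytically and one controls $\|f\|_{[1-\delta,1]}$ by $\|f\|_\infty$ up to a factor close to $1$ as $\delta\to 0$) that $\|f\|_{[1-\delta,1]} \le \|f\|_\infty$ trivially, but what I actually want is a lower bound: that $\|f\|_\infty$ is nearly recovered already on $[0,1-\delta]$ together with the value $f(1)$. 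Concretely, I would argue that for $\delta$ small enough (depending only on $\varepsilon$ and $\Lambda$), $\|f\|_\infty \le \tfrac{1}{1-\varepsilon/2}\bigl(\|f\|_{[0,1-\delta]} \vee |f(1)|\bigr)$ for all $f \in M(\Lambda)$; this is precisely the place where the structure of $M(\Lambda)$ (rather than arbitrary continuous functions) is essential, and it is the main obstacle of the proof. Granting it, I then take the sampling set to be $\{t_1,\dots,t_m\}\cup\{1-1/n : n \ge n_0\}$ and define $J_\varepsilon f := \bigl(f(t_1),\dots,f(t_m),\, f(1-1/n_0),\, f(1-1/(n_0+1)),\,\dots\bigr)$, reordered as a single sequence converging to $f(1)$, so $J_\varepsilon f \in c$ with $\|J_\varepsilon f\| = \max\bigl(\max_j|f(t_j)|,\,\sup_{n\ge n_0}|f(1-1/n)|,\,|f(1)|\bigr)$. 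Linearity and boundedness by $\|f\|_\infty$ are immediate, and combining the two mesh estimates above with the endpoint reduction gives $\|J_\varepsilon f\| \ge (1-\varepsilon)\|f\|_\infty$, completing the proof.

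The step I expect to be the genuine obstacle is the endpoint reduction in the third paragraph: showing that, uniformly over $f\in M(\Lambda)$, the sup-norm is almost attained on $[0,1-\delta]\cup\{1\}$ once $\delta$ is small. For arbitrary continuous functions this is false, so the argument must use a quantitative form of the Clarkson--Erdős--Schwartz description of $M(\Lambda)$ near $t=1$ (or an equivalent compactness/normal-families argument on the coefficients), and getting the dependence to be on $\varepsilon$ and $\Lambda$ only — not on $f$ — is the delicate point.
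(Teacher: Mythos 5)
There is a genuine gap, and it sits exactly where you predicted: the ``endpoint reduction''. The lemma you propose to fill it --- that for $\delta$ small enough (depending only on $\varepsilon$ and $\Lambda$) one has $\|f\|_\infty \le \frac{1}{1-\varepsilon/2}\bigl(\|f\|_{[0,1-\delta]} \vee |f(1)|\bigr)$ for all $f\in M(\Lambda)$ --- is false in general. Müntz spaces contain norm-one functions concentrated arbitrarily close to $t=1$ that vanish at $1$: for instance, suitably normalized differences $t^{\lambda_n}-t^{\lambda_m}$ attain their maximum at $(\lambda_n/\lambda_m)^{1/(\lambda_m-\lambda_n)}$, which tends to $1$ as $n\to\infty$, while being exponentially small on any fixed $[0,1-\delta]$ and zero at $t=1$. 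So no fixed $\delta$ works uniformly, and no quantitative Clarkson--Erd\H{o}s--Schwartz input will rescue this formulation. Your fixed sampling set $\{1-1/n\}$ does not repair it either, because its mesh near $1$ shrinks like $1/n^2$, with no relation to the Bernstein constants $c_\delta$, which may grow arbitrarily fast as $\delta\to 0$.

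The fix is that no endpoint reduction is needed at all: instead of one interval $[0,1-\delta]$ plus a leftover, run your own mesh argument on an exhaustion of $[0,1)$. Choose $a_0=0<a_1<a_2<\cdots\to 1$, let $K_i$ be the Bernstein constant for $[0,a_i]$, and on each block $[a_i,a_{i+1}]$ place sampling points with spacing at most $\varepsilon/K_{i+1}$. The resulting sequence $(s_n)$ still converges to $1$, so $(f(s_n))_n \in c$ by continuity, and now \emph{every} $s\in[0,1)$ lies in some $[a_i,a_{i+1}]$ within $\varepsilon/K_{i+1}$ of a sampling point, giving $|f(s)|\le \|J_\varepsilon f\|+\varepsilon\|f\|_\infty$ exactly as in your second paragraph; the value $|f(1)|=\lim_n|f(s_n)|\le\|J_\varepsilon f\|$ comes for free. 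This is the paper's argument: the only idea you are missing is to let the mesh adapt to the (possibly blowing-up) Bernstein constants block by block, rather than stopping the Bernstein control at a fixed $1-\delta$.
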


\begin{proof}
Let $\varepsilon>0$ and choose a sequence $0=a_0\!<\!a_1\!<\!\cdots\!<\!a_i\!<\!\cdots\!<\!1$ converging to 1. From Theorem \ref{thm:bound-bern} we know that for each $a_i \in (0,1)$ there exists $K_i >0$, depending on $a_i$ such that
\[
\|p'\|_{[0,a_i]} \leq K_i \| p\|_\infty \text{    for all   } p\in \Pi(\Lambda)
\]
Pick points $0=s_0\! <\!s_1\! <\!\cdots\!<\!s_{n_1}\!=\!a_1\! <\!s_{{n_1}+1}\!<\!\cdots\!<\!s_{n_2}\!=\!a_2\!<\!\cdots$,
 in such a way that 
\[
s_{j+1}-s_j \leq \frac{\varepsilon}{K_{i+1}} \text{ for $n_i \leq j < n_{i+1}$}.
\]
Define the operator $J_\varepsilon$ by $ J_\varepsilon (f) = (f(s_n))_n $, thus $\| J_\varepsilon\| \leq 1$ and continuity of $f\in M(\Lambda)$ gives $\lim_{n\rightarrow \infty}f(s_n) \rightarrow f(1)$. \\ 
For any $f\in  M(\Lambda)$ let $(f_k)$ be a sequence in $\Pi(\Lambda)$ converging uniformly to $f$. Let $\delta >0$ and find $N\in \enn$ such that $\|f-f_N\| <  \delta$. Then, for any $s\in [0,1)$, we have $a_i\leq s < a_{i+1}$  for some $i\in \enn$. Let $s_m\in [a_i, a_{i+1}] $ be the member of $(s_n)$ closest to $s$. Then
\begin{align*}
	|f(s)|  \leq |f_N(s)| +\delta & \leq |f_N(s)-f_N(s_m)|+|f_N(s_m)|+\delta \\
	& \leq \sup_{a_i \leq t \leq a_{i+1}} |f_N'(t)||s-s_m|+\|J_\varepsilon f_N\|+\delta \\
	& \leq \|f_N\|_\infty K_{i+1}  \frac{\varepsilon}{K_{i+1}} +\|J_\varepsilon f_N\| +\delta\\
	& \leq \|f_N\|_\infty \varepsilon+ \|J_\varepsilon f_N \|+\delta \\
	& \leq (\|f\|_\infty+\delta)\varepsilon+ (\|J_\varepsilon f\|+\delta)+\delta \\ 
\end{align*}
and therefore  
\[
(1 -\varepsilon)\|f\|_\infty- \delta(\varepsilon+2)\leq \|J_\varepsilon f\|.
\]
Since $\delta$ was arbitrary we conclude that 
\[
(1-\varepsilon)\|f\|_\infty \leq \|J_\varepsilon f \| \leq \| f \|_\infty.
\]

\end{proof}	
We now easily obtain the following.
\begin{thm}\label{thm:directsum}
	Every Müntz space $M(\Lambda)$ can be written as $X\oplus Y$ where $X$ is finite dimensional and $Y$ is almost isometric to a subspace of $c$.
\end{thm}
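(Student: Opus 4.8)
The plan is to peel off a finite–dimensional piece so that the remainder satisfies the hypothesis $\lambda_1\geq 1$ of Proposition~\ref{Prop:almostiso}, and then invoke that proposition. First I would note that the Müntz condition $\sum_{i=1}^\infty 1/\lambda_i<\infty$ forces $\lambda_i\to\infty$; pick $N\in\enn$ with $\lambda_N\geq 1$, so that $\lambda_i\geq 1$ for all $i\geq N$. Put $Y:=\cspann(t^{\lambda_i})_{i\geq N}$ and $X:=\spann(t^{\lambda_i})_{0\leq i<N}$. Applying Lemma~\ref{lem:remove-fincodim} to $Z=M(\Lambda)$ with the generators $(t^{\lambda_i})$, removing the finitely many whose exponent is $<1$, gives $M(\Lambda)=X\oplus Y$ with $\dim X<\infty$; since $X$ is finite dimensional this is automatically a topological direct sum, so nothing needs to be checked about boundedness of the projections.

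Next I would identify $Y$ as a subspace of an honest Müntz space covered by Proposition~\ref{Prop:almostiso}. Let $\Lambda'':=(0,\lambda_N,\lambda_{N+1},\dots)$. This is again a Müntz sequence (its reciprocal sum is a tail of the original one, hence finite) and its first positive exponent $\lambda_N$ is $\geq 1$, so Proposition~\ref{Prop:almostiso} yields, for each $\varepsilon>0$, an operator $J_\varepsilon:M(\Lambda'')\to c$ with $(1-\varepsilon)\|f\|_\infty\leq\|J_\varepsilon f\|\leq\|f\|_\infty$. Since $Y=\cspann(t^{\lambda_i})_{i\geq N}$ is a closed subspace of $M(\Lambda'')=\cspann(1,t^{\lambda_N},t^{\lambda_{N+1}},\dots)$, the restriction $J_\varepsilon|_Y$ still satisfies the same two–sided estimate; as $Y$ is complete and $J_\varepsilon|_Y$ is bounded below, its image is a closed subspace of $c$ onto which $Y$ is carried with the same bounds. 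Hence $Y$ is almost isometric to a subspace of $c$, and $M(\Lambda)=X\oplus Y$ is the required decomposition. (Alternatively, one may apply the $M_0$–version of Proposition~\ref{Prop:almostiso}, mentioned before the statement, directly to $Y=M_0(\Lambda'')$.)

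I do not expect a genuine obstacle here: the theorem is essentially a bookkeeping consequence of the two preceding results. The only two points deserving a moment's attention are (i) that the original exponent $\lambda_1$ need not be $\geq 1$, which is precisely why finitely many initial exponents must be discarded into $X$; and (ii) that ``almost isometric to a subspace of $c$'' is inherited by closed subspaces, which is what legitimises restricting $J_\varepsilon$ from $M(\Lambda'')$ to $Y$. Both are routine, and everything else is a direct citation of Lemma~\ref{lem:remove-fincodim} and Proposition~\ref{Prop:almostiso}.
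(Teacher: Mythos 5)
Your proposal is correct and is exactly the argument the paper intends: the paper's proof is the one-line ``Combine Lemma~\ref{lem:remove-fincodim} and Proposition~\ref{Prop:almostiso}'', and your write-up simply supplies the routine details (discarding the finitely many exponents below $1$ into $X$, and noting that the almost-isometric embedding of Proposition~\ref{Prop:almostiso} restricts to the closed subspace $Y$). Nothing is missing.
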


\begin{proof}
Combine Lemma~\ref{lem:remove-fincodim} and Proposition \ref{Prop:almostiso}.
\end{proof}


\begin{rem}
From \cite[Theorem~10.4.4]{MR2190706} it is known that no Müntz space of dimension greater than 2 is polyhedral. However, from Theorem \ref{thm:directsum} we see that Müntz spaces are isomorphic to a subspace of $c_0$. Since $c_0$ is polyhedral (\cite[Theorem~4.7]{MR0139073}), it follows that any Müntz space can be renormed to be polyhedral.
\end{rem}


By observing that Theorem \ref{thm:directsum} implies that $M(\Lambda)^*$ is separable, we now easily answer the question posed in \cite{MR3714458}. In fact we show more. 

\begin{thm}\label{thm:non-oh}
	No Müntz space $M(\Lambda)$ is LOH.
\end{thm}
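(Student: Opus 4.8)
The plan is to exploit the structural decomposition from Theorem~\ref{thm:directsum} together with the fact that both summands fail to be LOH, and that LOH is not inherited by finite direct sums in a way that would allow $M(\Lambda)$ to recover it. Concretely, write $M(\Lambda)=X\oplus Y$ with $X$ finite dimensional and $Y$ almost isometric to a subspace of $c$. The key observation is that $M(\Lambda)^*$ is separable: $X^*$ is finite dimensional, and $Y^*$ is separable because $Y$ embeds (almost isometrically, hence isomorphically) into $c$, whose dual $\ell^1$ is separable, so $Y^*$ is a quotient of $\ell^1$ and therefore separable. Thus $M(\Lambda)^*$ is separable.

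Next I would recall the known obstruction: a Banach space that is LOH cannot have a separable dual, or more precisely, LOH forces the dual unit ball to be large in a way incompatible with separability. The cleanest route is via the characterization that $X$ is LOH if and only if for every $x^*\in S_{X^*}$ and every $\eps>0$, the slice $S(x^*,\eps)$ has diameter $2$ — equivalently, $X$ is LOH iff $X^*$ has the \emph{weak$^*$ local diameter two property}'s dual formulation. Actually the sharpest tool is simpler: it is known (and I would cite the relevant fact from the diameter-two-properties literature, e.g.\ \cite{haller2018} or \cite{MR3714458}) that if $X$ is LOH then $X$ contains an isometric (or almost isometric) copy of structure preventing $X^*$ from being separable — but the most robust argument is that a space with an LUR renorming, or more generally a space whose dual is separable, admits a renorming with some rotundity that kills octahedrality. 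I would instead argue directly: since $M(\Lambda)^*$ is separable, by a theorem of Godefroy--Kalton--Lancien / standard renorming theory, $M(\Lambda)$ admits an equivalent norm that is (at least) rotund, and rotundity is incompatible with LOH for any equivalent norm — but LOH is an isomorphic\dots no, it is isometric. So the honest argument is: LOH implies the diameter of every slice of $B_X$ determined by a norm-attaining functional is $2$, and combined with separability of $X^*$ one derives that $X$ contains $\ell_\infty^n$-like structure uniformly, forcing non-separability of $X^*$, a contradiction.

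The cleanest correct path, which I would actually write, is: LOH passes to no finite-codimensional obstruction, and more to the point, it is a theorem that \emph{$X$ is LOH implies $X$ is not an Asplund space unless\dots} — hmm. Let me commit to the argument I am most confident survives scrutiny: a subspace of $c$ (or $c_0$) is an Asplund space with separable dual, and it is known that $c_0$ and all its subspaces, being polyhedral or at least containing no bad structure, are \emph{not} LOH; moreover LOH is stable under passing to $X\oplus_\infty$ finite-dimensional summands only if the infinite-dimensional part already is LOH. So the decomposition $M(\Lambda)=X\oplus Y$ reduces the claim to: (i) $Y$, being almost isometric to a subspace of $c$, is not LOH — which follows because for $\eps$ small the almost-isometric embedding into $c$ forces, for a suitable coordinate functional $e_n^*$, the slice $S(e_n^*,\eps)$ to have small diameter; and (ii) adding a finite-dimensional $\ell^\infty$-summand cannot create LOH, because any $x\in S_{M(\Lambda)}$ has a component in $Y$ on which the same small-slice obstruction applies, or lies essentially in $X$ where finite dimensionality gives extreme points and hence rules out the $\|x\pm y\|>2-\eps$ condition for all directions $y$.

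The main obstacle I anticipate is making step (i) precise: an \emph{almost} isometric embedding $J_\eps$ only controls norms up to $(1-\eps)$, so I must track whether a small distortion of $c$'s geometry can still support LOH. The resolution is that LOH is a \emph{qualitative} property destroyed by the existence of even one $\eps_0>0$ and one functional $x^*$ with $\mathrm{diam}\,S(x^*,\eps_0)<2$; since subspaces of $c$ manifestly have such functionals (e.g.\ the limit functional, or a coordinate functional, attains its norm at a point that is far from $-$ the rest of the slice), and the almost-isometric image inherits a correspondingly distorted but still ``flat in one direction'' functional, one gets a slice of $B_{M(\Lambda)}$ of diameter bounded away from $2$, contradicting LOH. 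I would therefore isolate as a lemma: \emph{if $X$ is almost isometric to a subspace of $c$, then $X$ is not LOH}, prove it via the coordinate/limit functional, and then handle the finite-dimensional summand by the triangle inequality splitting $\|x\pm y\|$ across $X\oplus Y$.
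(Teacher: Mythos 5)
You start in the right place: the decomposition of Theorem~\ref{thm:directsum} gives that $M(\Lambda)^*$ is separable, and that is exactly the fact the paper's proof uses. But the argument you actually commit to is broken at its key step. You propose to contradict LOH by exhibiting a slice of $B_{M(\Lambda)}$ (coming from a coordinate or limit functional acting through the almost-isometric embedding into $c$) of diameter bounded away from $2$. That is not a contradiction: by the duality theorem of Haller, Langemets and P\~oldvere \cite{MR3346197}, LOH of a space $Z$ is equivalent to every \emph{weak$^*$ slice of $B_{Z^*}$} having diameter $2$, not to every slice of $B_Z$ having diameter $2$. Indeed $\ell_1$ is octahedral (hence LOH) while $B_{\ell_1}$ has denting points and therefore slices of arbitrarily small diameter, so ``small slice of the unit ball $\Rightarrow$ not LOH'' is false. (As a side remark, coordinate functionals do not even produce small slices of $B_{c_0}$: the slice $\{y\in B_{c_0}: y_1>1-\varepsilon\}$ contains $e_1\pm e_2$ and so has diameter $2$.) Your treatment of the finite-dimensional summand --- splitting $\|x\pm y\|$ by the triangle inequality across $X\oplus Y$, or invoking extreme points of $X$ --- also cannot work as stated, because the decomposition is only a topological direct sum (not an $\ell_\infty$-sum), while LOH is an isometric property of the whole space.

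The repair is precisely the route you mention and then abandon. Since $M(\Lambda)^*$ is separable, $M(\Lambda)$ is an Asplund space, so the unit ball of $M(\Lambda)^*$ admits \emph{weak$^*$} slices $S(x,\varepsilon)$ with $x\in M(\Lambda)$ of arbitrarily small diameter (this is the content of the results from \cite{MR704815} cited in the paper); by the duality theorem of \cite{MR3346197} this is exactly the negation of LOH. Once separability of the dual is in hand, no further use of the direct-sum structure or of coordinate functionals is needed.
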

\begin{proof}
	Since $M(\Lambda)^*$ is separable, we can combine \cite[Theorem~4.1.3]{MR704815}	with  \cite[Theorem~4.2.13]{MR704815} to see that there exist slices  $S(x,\varepsilon)$ of the unit ball of $M(\Lambda)^*$ of arbitrarily small diameter, where $x$ can be taken from $M(\Lambda)$. By \cite[Theorem 3.1]{MR3346197} this is equivalent to $M(\Lambda)$ failing to be LOH.

\end{proof}


We finish this article by showing that every Müntz space fails to be ASQ. First we show this for some Müntz spaces.

\begin{prop}\label{prop:Ikkelasq}
	No Müntz space	$M(\Lambda)$ with $\lambda_1\geq 1$ is LASQ.
\end{prop}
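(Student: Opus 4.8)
The plan is to exploit the almost-isometric embedding $J_\varepsilon\colon M(\Lambda)\to c$ from Proposition~\ref{Prop:almostiso} together with the functional ``evaluation at $1$'' to produce, for a well-chosen unit vector $x\in S_{M(\Lambda)}$, an obstruction to the LASQ condition. The key observation is that for $p\in\Pi(\Lambda)$ with $\lambda_1\ge 1$ every element of $M(\Lambda)$ is continuous on $[0,1]$ and, by a normal-families/Montel-type argument (or directly from the Bounded Bernstein inequality, Theorem~\ref{thm:bound-bern}, which controls derivatives uniformly on $[0,1-\delta]$), a bounded sequence in $M(\Lambda)$ has a subsequence converging uniformly on compact subsets of $[0,1)$; the value at $t=1$ is the only place where mass can ``escape''. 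So I expect the correct $x$ to be the monomial $x=t^{\lambda_1}/\|t^{\lambda_1}\|_\infty = t^{\lambda_1}$, which attains its norm $1$ only at $t=1$, and whose value at points near $1$ is close to $1$.

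First I would fix $x(t)=t^{\lambda_1}$, note $\|x\|_\infty=1$ with $x(1)=1$, and suppose toward a contradiction that there is a sequence $(y_n)\subseteq B_{M(\Lambda)}$ with $\|x\pm y_n\|_\infty\to 1$ and $\|y_n\|_\infty\to 1$. From $\|x+y_n\|_\infty\to1$ and $\|x-y_n\|_\infty\to1$, evaluating near $t=1$: since $x(t)\to 1$ as $t\to1$, for $t$ close enough to $1$ we have $x(t)+y_n(t)\le \|x+y_n\|_\infty$ and $x(t)-y_n(t)\ge -\|x-y_n\|_\infty$, which forces $y_n(t)$ to be small near $t=1$; more precisely $\limsup_n \sup_{t\in[1-\eta,1]}|y_n(t)|$ can be made small by taking $\eta$ small. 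Second, on the complementary interval $[0,1-\eta]$, I would use Theorem~\ref{thm:bound-bern} (and density of $\Pi(\Lambda)$) to see that the $y_n$ form a bounded, uniformly Lipschitz family there, hence (Arzelà–Ascoli) have a subsequence converging uniformly on $[0,1-\eta]$ to some $g$; combined with the first step, $\|y_n\|_\infty$ is, up to a vanishing error, $\sup_{[0,1-\eta]}|y_n|$, so along the subsequence $\|y_n\|_\infty\to\|g\|_{[0,1-\eta]}$, and since $\|y_n\|_\infty\to1$ we get a function $g$ of sup-norm essentially $1$ on $[0,1-\eta]$ that is small near $1$. Third, the contradiction: apply the same comparison with $x$ on $[0,1-\eta]$. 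There $x(t)=t^{\lambda_1}\le (1-\eta)^{\lambda_1}<1$, so $\|x\pm y_n\|_{[0,1-\eta]}\ge |x(t)\pm y_n(t)|$ at a point where $|y_n(t)|\approx1$ gives $\|x\pm y_n\|_\infty\ge 1+(\text{something bounded below in terms of }\eta)$ after choosing the sign of $y_n(t)$ appropriately — contradicting $\|x\pm y_n\|_\infty\to1$. One has to be slightly careful that a single point $t$ can be used with a fixed sign for, say, $x+y_n$ when $y_n(t)>0$ (or $x-y_n$ when $y_n(t)<0$), so passing to a further subsequence on which $\mathrm{sign}\,y_n(t_0)$ is constant at a near-maximizer $t_0\in[0,1-\eta]$ handles this.

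The main obstacle I anticipate is bookkeeping the three scales of error simultaneously: the $\varepsilon_n\to0$ coming from $\|x\pm y_n\|_\infty\to1$ and $\|y_n\|_\infty\to1$, the localization parameter $\eta$ controlling how much $y_n$-mass sits near $t=1$, and the approximation error from replacing $y_n$ by nearby elements of $\Pi(\Lambda)$ to invoke Theorem~\ref{thm:bound-bern}. The clean way to organize this is: choose $\eta$ first so that $(1-\eta)^{\lambda_1}\le 1-2\kappa$ for a fixed $\kappa>0$; show $\sup_{[1-\eta,1]}|y_n(t)|\to$ something $\le \kappa/2$ (shrinking $\eta$ further if needed, using that $x(t)\to1$); deduce $\sup_{[0,1-\eta]}|y_n(t)|\to 1$; pick near-maximizers $t_n\in[0,1-\eta]$, pass to a subsequence with constant sign of $y_n(t_n)$, and read off $\|x\pm y_n\|_\infty\ge x(t_n)+|y_n(t_n)| $ is false — rather $\ge |y_n(t_n)| - x(t_n)$ is the wrong direction, so instead use $\|x\mp y_n\|_\infty \ge |{-x(t_n)}\pm\mathrm{sign}(y_n(t_n))|y_n(t_n)|| $; with the right sign this is $\ge x(t_n)+|y_n(t_n)|\to 1+1>1$ is too strong, the honest bound is $\|x\pm y_n\|_\infty\ge |y_n(t_n)|+\text{(nothing)}$ unless signs align, but since we may choose the sign to align we do get $\ge x(t_n)+|y_n(t_n)|\to(1-\eta)^{\lambda_1}+1>1$. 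That last inequality is the contradiction, completing the proof.
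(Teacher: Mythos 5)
Your overall strategy (take $x=t^{\lambda_1}$, show that $\|x\pm y_n\|\to 1$ forces $y_n$ to be small near $t=1$ where $x$ is large, then locate the norm of $y_n$ elsewhere and force $\|x\pm y_n\|$ to exceed $1$) is in the spirit of the paper's proof, but the final step has a genuine gap. The near-maximizer $t_n$ of $|y_n|$ on $[0,1-\eta]$ need not tend to $1-\eta$; it can sit at or near $0$, where $x(t_n)=t_n^{\lambda_1}\to 0$, so your lower bound $x(t_n)+|y_n(t_n)|$ tends to $1$ rather than to $(1-\eta)^{\lambda_1}+1$, and no contradiction results. This is not a repairable bookkeeping issue for your choice of $x$: since $\lambda_0=0$ the constant function lies in $M(\Lambda)$, and $y_n:=1-t^{\lambda_1}$ satisfies $\|y_n\|_\infty=1$ and $\|t^{\lambda_1}\pm y_n\|_\infty=1$, so it passes every test in your argument; the contradiction you seek simply does not exist for $x=t^{\lambda_1}$ in the full space $M(\Lambda)$.

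The missing idea is the paper's essential use of the Bounded Bernstein inequality \emph{near $0$} (you invoke Theorem~\ref{thm:bound-bern} only to get equicontinuity on $[0,1-\eta]$ for an Arzel\`a--Ascoli step that is not actually needed). For $f\in B_{\Pi(\Lambda)}$ with no constant term one has $|f(t)|=|f(t)-f(0)|\le\|f'\|_{[0,a]}\,t\le 1/2$ on $[0,a]$ with $a=\min(1/(2c),x_0)$, so $|y_n|$ cannot come close to $1$ on $[0,a]$; this pins your near-maximizer into $[a,1-\eta]$, where $x(t_n)\ge a^{\lambda_1}>0$ and your sign-alignment argument then does close the contradiction. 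Note, however, that this estimate requires working with functions vanishing at $0$ (the subspace $M_0(\Lambda)$, or the finite-codimensional subspaces $\cspann(t^{\lambda_i})_{i\ge N}$ used for the ASQ result); for $M(\Lambda)$ itself the witness has to be $g=t^{\lambda_0}=1$, for which $\|1\pm h\|\le 1+\varepsilon$ forces $\|h\|\le\varepsilon$ immediately. The paper also argues via the reformulation of LASQ from \cite[Theorem~2.1]{MR3415738} (existence of $h\in S_X$ with $\|g\pm h\|\le 1+\varepsilon$) rather than from the definition, and the embedding $J_\varepsilon$ of Proposition~\ref{Prop:almostiso}, which you announce at the outset, plays no role in either argument.
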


\begin{proof}
	Let $\Lambda$ be a Müntz sequence with $\lambda_1\geq 1$ and $M(\Lambda)$ be the associated M{\"u}ntz space. Choose some $x\in(0,1)$. By Theorem \ref{thm:bound-bern} there is a $c\in \enn$ such that $\|f'\|_{[0,x]} \leq c$ for all
	$f\in B_{\Pi(\Lambda)}$. Let $a = \min(\frac{1}{2c}, x)$ and observe that
	\begin{equation*}
		\sup_{f\in B_{\Pi(\Lambda)}} \|f\|_{[0,a]}
	 \leq \frac{1}{2},
	 	\leq \sup_{f\in B_{\Pi(\Lambda)}}\|f\|_{[0,1/2c]}
	\end{equation*}
	since
	\begin{equation*}
		|f(t)| = |f(t) - f(0)|
		\le \|f'\|_{[0,a]} \cdot |t-0|
		\le c \cdot \frac{1}{2c} = \frac{1}{2}.
	\end{equation*}
	
	Recall from \cite[Theorem~2.1]{MR3415738} that $M(\Lambda)$ is LASQ if and only if for every $g\in S_{M(\Lambda)}$ and  $\varepsilon>0$ there exists $h\in S_{M(\Lambda)}$ such that $\|g\pm h\|\leq 1+\varepsilon$. We claim that no such $h$ exists for $g=t^{\lambda_0}$. Indeed, if 	$0 < \varepsilon < a^{\lambda_0}/2$ and $f\in S_{\Pi(\Lambda)}$ is such that
	$\|t^{\lambda_0} \pm f\| \leq 1 + \varepsilon$. Then $|f(t)| < 1 - \varepsilon$ for $t \geq a$ as $t^{\lambda_0} > 2\varepsilon$ for
	$t \geq a$. Thus, $f$ must attain
	its norm on the interval $[0,a]$, contradicting our observation. As $\Pi(\Lambda)$ is dense in
	$M(\Lambda)$, we conclude that $M(\Lambda)$ is not LASQ.

\end{proof}

\begin{prop}
	No M{\"u}ntz space $M(\Lambda)$ is ASQ.
\end{prop}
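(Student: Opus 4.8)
The plan is to reduce the general case to Proposition~\ref{prop:Ikkelasq} by combining a translation/normalization trick with the structure theorem. First I would observe that ASQ implies LASQ, so it suffices to show that no Müntz space is LASQ. The obstacle in applying Proposition~\ref{prop:Ikkelasq} directly is twofold: the Müntz sequence may have $\lambda_1 < 1$ (only finitely many $\lambda_i$ lie in $[0,1)$, since $\sum 1/\lambda_i < \infty$ forces $\lambda_i \to \infty$), and the constant $t^{\lambda_0} \equiv 1$ may or may not be present. However, the key point carried over from the proof of Proposition~\ref{prop:Ikkelasq} is really a statement about the \emph{local} behaviour of functions in $B_{M(\Lambda)}$ near $0$: by the Bounded Bernstein inequality (Theorem~\ref{thm:bound-bern}), applied to the tail Müntz space $M(\Lambda')$ with $\Lambda' = (\lambda_i)_{\lambda_i \geq 1}$, there is a small $a > 0$ such that every $f \in B_{M(\Lambda')}$ satisfies $\|f\|_{[0,a]} \leq 1/2$.

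Next I would handle the finitely many small exponents. Write $M(\Lambda) = E \oplus M(\Lambda')$ where $E = \spann\{t^{\lambda_i} : \lambda_i < 1\}$ is finite dimensional; this is legitimate by Lemma~\ref{lem:remove-fincodim} (with $N$ the index of the last exponent below $1$). The heuristic is that LASQ is not inherited by taking a direct sum with a finite dimensional space in a way that would help here — more precisely, I would argue directly. Pick $g = t^{\lambda_m}$ where $\lambda_m$ is the smallest exponent that is $\geq 1$ (if $\lambda_0 = 0$ is available one can also just use $g \equiv 1$ after a separate, easier argument, but working inside $M(\Lambda')$ is cleanest). Using the characterization \cite[Theorem~2.1]{MR3415738}, LASQ of $M(\Lambda)$ would give, for each $\varepsilon > 0$, some $h \in S_{M(\Lambda)}$ with $\|g \pm h\| \leq 1 + \varepsilon$. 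The strategy is to show this forces $h$ (or rather its relevant part) to nearly attain its norm on $[0,a]$, while the Bernstein bound says functions in the unit ball of the relevant subspace are small there, yielding a contradiction once $\varepsilon$ is small enough compared to $a^{\lambda_m}$.

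The step I expect to be the main obstacle is controlling the interaction between the finite dimensional part $E$ and the tail: a general $h \in M(\Lambda)$ is not in $M(\Lambda')$, so the clean Bernstein estimate does not apply to $h$ itself. To deal with this I would either (i) choose the test element $g$ and exploit that $\|g \pm h\| \leq 1 + \varepsilon$ on all of $[0,1]$, so in particular near $t = 1$ where $g$ is close to $1$, forcing the values of $h$ near $1$ to be small, and then use a compactness/normal-families argument (Müntz polynomials of bounded norm form a normal family on $[0,a]$ by Bernstein applied to the full space on $[0,1-\delta]$) to push the norm of $h$ toward the left endpoint; or (ii) more simply, note that since $E$ is finite dimensional one may, at the cost of an arbitrarily small perturbation, pass to $M(\Lambda')$: decompose $h = e + h'$ with $e \in E$, $h' \in M(\Lambda')$, and observe that $\|g \pm h\| \le 1 + \varepsilon$ together with $g, h' \in M(\Lambda')$ and $g$ being bounded away from $E$ in a suitable quotient sense forces $\|e\|$ to be small. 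Once $h$ is replaced by $h' \in M(\Lambda')$ with $\|h'\| \geq 1 - \varepsilon'$ and $\|g \pm h'\| \leq 1 + \varepsilon'$, the argument of Proposition~\ref{prop:Ikkelasq} applies verbatim: $|h'(t)| < 1 - \varepsilon'$ for $t \geq a$ because $g(t) = t^{\lambda_m} > 2\varepsilon'$ there, so $h'$ attains its norm on $[0,a]$, contradicting $\|h'\|_{[0,a]} \leq \tfrac12 < 1 - \varepsilon'$. Hence $M(\Lambda)$ is not LASQ, so not ASQ.
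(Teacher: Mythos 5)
Your reduction to LASQ and your choice of witness both fail at the decisive step. You claim that $\|g\pm h\|\le 1+\varepsilon$ with $g=t^{\lambda_m}$ forces the $E$-component of $h$ to be small (or, in variant (i), forces $h$ into a position where the Bernstein bound applies). This is false. Take $h=1-t^{\lambda_m}\in S_{M(\Lambda)}$ (recall the paper's standing convention $\lambda_0=0$, so the constants lie in $M(\Lambda)$). Then $g+h\equiv 1$ and $g-h=2t^{\lambda_m}-1$, so $\|g\pm h\|=1$ exactly, while the $E$-component of $h$ is the constant function, of norm one. Thus $t^{\lambda_m}$ is not a witness against LASQ of the full space at all; the problem is not a perturbation to be controlled but a structural failure: the estimate $\|f\|_{[0,a]}\le \tfrac12$ simply does not hold on $B_{M(\Lambda)}$, and no argument starting only from $\|g\pm h\|\le 1+\varepsilon$ can replace $h$ by an element of $M(\Lambda')$. (Your parenthetical fallback $g\equiv 1$ does work trivially, since $\max\|1\pm h\|\ge 1+\|h\|$, but you explicitly decline to pursue it, and it is unavailable for the constant-free spaces $M_0(\Lambda)$ with exponents in $(0,1)$, which the paper states are also covered.)

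The missing idea is that the passage from the tail space back to $M(\Lambda)$ must be made at the level of ASQ, not LASQ. The paper's proof is short: by Lemma~\ref{lem:remove-fincodim} the space $\cspann(t^{\lambda_i})_{\lambda_i\ge 1}$ is a finite-codimensional subspace of $M(\Lambda)$; by Proposition~\ref{prop:Ikkelasq} it is not LASQ, hence not ASQ; and by \cite[Theorem~3.6]{MR3496032} ASQ passes to finite-codimensional subspaces, so $M(\Lambda)$ cannot be ASQ. This hereditary property is exactly the flexibility that ASQ has and LASQ lacks, and it is why the paper proves ``not ASQ'' rather than ``not LASQ'' in general. By reducing immediately to LASQ you discard that tool and are then forced into the untenable claim that the finite-dimensional component of $h$ is small.
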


\begin{proof}
	Combining Lemma \ref{lem:remove-fincodim} with Theorem \ref{prop:Ikkelasq} shows that every Müntz space have a subspace of finite codimension which is not ASQ. By  \cite[Theorem~3.6]{MR3496032} no Müntz space can be ASQ. 
\end{proof}

\bibliographystyle{amsplain}
\providecommand{\bysame}{\leavevmode\hbox to3em{\hrulefill}\thinspace}
\providecommand{\MR}{\relax\ifhmode\unskip\space\fi MR }
\providecommand{\MRhref}[2]{%
	\href{http://www.ams.org/mathscinet-getitem?mr=#1}{#2}
}
\providecommand{\href}[2]{#2}

\end{document}